\newcommand{\sm}[2]{\begin{smallmatrix}\item1\\\item2 \end{smallmatrix}}
\newcommand\ddfrac[2]{\frac{\displaystyle \item1}{\displaystyle \item2}}
\newcommand{\mc}[1]{\mathbb{\item1}}
\renewcommand{\le}{\leqslant}
\newcommand{\cE}{{\mathcal{E}}}
\newcommand{\cG}{{\mathcal{G}}}
\newcommand{\cT}{{\mathcal{T}}}
\newcommand{\cV}{{\mathcal{V}}}
\newcommand{\cbr}[1]{\left( #1 \right)}
\newcommand{\br}[1]{\left\{ #1 \right\}}
\def\<#1,#2>{\langle #1,#2\rangle}
\begin{document}
\title{
An Equilibrium Dynamic Traffic Assignment Model with Linear Programming Formulation
}

\titlerunning{An Equilibrium Dynamic Traffic Assignment Model}

\author{
  Victoria Guseva\inst{1},
  Ilya Sklonin\inst{1},
  Irina Podlipnova\inst{1},
  Demyan Yarmoshik\inst{1,3},
  Alexander Gasnikov\inst{2,3,4}
}
  
  \authorrunning{V. Guseva et al.}
\institute{Moscow Institute of Physics and Technology, Dolgoprudny, Russia
  \\\email{vigus1810@gmail.com}, \email{\{sklonin.ilya, podlipnova.iv, yarmoshik.dv\}@phystech.edu}
  \and
  Innopolis University, Innopolis, Russia
  \\\email{gasnikov@yandex.ru}
  \and
  Institute for Information Transmission Problems, Moscow, Russia
  \and
  Caucasus Mathematical Center, Adyghe State University, Maikop, Russia
}

\maketitle              

\begin{abstract}
In this paper, we consider a~dynamic equilibrium transportation problem. There is a~fixed number of cars moving from origin to destination areas. Preferences for arrival times are expressed as a~cost of arriving before or after the preferred time at the destination. Each driver aims to minimize the time spent during the trip, making the time spent a~measure of cost. The chosen routes and departure times impact the network loading. The goal is to find an equilibrium distribution across departure times and routes. 

For a relatively simplified transportation model we show that an equilibrium traffic distribution can be found as a solution to a linear program.
In earlier works linear programming formulations were only obtained for social optimum dynamic traffic assignment problems.
We also discuss algorithmic approaches for solving the equilibrium problem using time-expanded networks.
\keywords{Transportation modelling \and Dynamic models \and Linear programming \and User equilibrium}
\end{abstract}

\section{Introduction}
The equilibrium traffic assignment problem is fundamental to transportation modelling. 
Essentially, the problem is to find the distribution of flows in a transportation network generated by the selfish route choices of drivers. 
Solving for such a distribution with various network parameters allows predicting network load and facilitates informed decision-making for better network design.
There are two different approaches to traffic assignment: static and dynamic.

In this paper we develop a simple dynamic equilibrium model that reduces to a minimum cost multicommodity flow problem, similar to the static Stable Dynamics model \cite{nesterov2003stationary}. 
It benefits from lower computational requirements compared to other dynamic models while avoiding the drawbacks of static models, such as relying on the contradictory notion of link performance functions in the Beckmann model~\cite{beckmann1956studies} or the infeasibility of the Stable Dynamics model in peak hours.

\subsection{Literature Review}
\textbf{Static traffic assignment.} Static models assume that network flows do not depend on the time variable and thus can only capture the average characteristics of the network's loading for a given period of time.
The Beckmann model~\cite{beckmann1956studies} with Bureau of Public Roads (BPR) functions representing the dependency between flow density and travel time on a road segment is probably the most popular model~\cite{boyles2023transportation,reeder2012positive}.
Paper \cite{nesterov2003stationary} presented the alternative Stable Dynamics static model, where the link flows are strictly limited by the capacities, motivated by the logical inconsistencies of the Beckmann model.

A significant advantage of static models is their computational efficiency. 
Static equilibrium models can be formulated as convex optimization problems \cite{beckmann1956studies} or variational inequalities if flow interactions are considered  \cite{dafermos1980traffic,smith1979existence}.
Many highly performant algorithms were designed specifically for such problems \cite{mitradjieva2013stiff,babazadeh2020reduced,kubentayeva2021finding,ignashin2024modifications}.
This allows for the development of combined travel demand models based on static models, which can still be solved within a reasonable time limit for decent accuracy in large networks \cite{florian2002multi,kubentayeva2024primal}.

\textbf{Dynamic traffic assignment.} First dynamic traffic assignment models were suggested in \cite{merchant1978model,merchant1978optimality}.
However, they were limited to searching only system optimum, which means that each single driver wants to reduce not his personal cost, but total expenses of all drivers in a system, according to Wardrop's second principle. These first models were limited to single destination and single commodity due to computational complexity of dynamic models.
In the work of Carey \cite{carey1992nonconvexity} it was shown that when models consider multicommodities and multiple destinations then we should also think about first-in-first-out (FIFO) condition. Independently of what type of equilibrium we search -- user or system -- there is non-convexity arise due to FIFO constraint. 
Author of the paper considers FIFO condition on average. This means that in reality some particular vehicles can pass each other and travel at different speed, but on average traffic which enter a road first will exit from it first.

Similar idea is highlighted in 
\cite{zhang2005some}. The authors conclude that FIFO is not a necessary constraint for building dynamic traffic assignment problem, however FIFO assumption can simplify building link and traffic dynamic models.

The study and analysis of traffic congestion during rush hours have a long-standing history, beginning with the pioneering work of Vickrey in 1969 \cite{vickrey1969congestion}. 
In Vickrey's fundamental model, it is assumed that a set of commuters, aim to reach a single destination using a single route that features a bottleneck with limited capacity at the desired time, which is evenly distributed over a closed interval. Each commuter decides on their departure time from home to minimize their total travel cost, which includes travel time, waiting time at the bottleneck, and the costs of arriving early or late at their destination. 
Several authors have extended this model. 
For example, \cite{smith1984existence} investigated the existence of  equilibrium for a wider class of arrival cost functions. The follow-up work \cite{daganzo1985uniqueness} described additional properties of the equilibrium in the model of \cite{smith1984existence}. 
Papers \cite{arnott1992route,liu2015semi} consider heterogeneities in the cost of travel time or delay in the schedule. There are also works (for example \cite{daganzo1985uniqueness}) in which different types of user heterogeneities are assumed (work start times and value of times), but with a very stringent assumption about the piecewise linearity of the schedule function. However, in \cite{nie2009numerical}, the authors showed that there is no guaranteed convergence in the straightforward formulation of the equilibrium conditions. As noted earlier, all these works consider the simplest network design~---  the case of a single origin-destination pair connected by one route with a single bottleneck. In the articles \cite{li2020fifty} and \cite{akamatsu2021new} the authors decided to systematize the models of departure-time choice equilibrium (DTCE) for this type of road network; in the second work the authors additionally proposed a novel approach to modeling departure time equilibrium. They utilize continuous-time linear programming and optimal transport theory to achieve analytical solutions. Special attention is given to the Monge property, which determines sorting patterns of equilibrium, allowing models to account for multiple types of user heterogeneity.

There are also many studies focusing on other types of road networks. The paper by \cite{akamatsu2015corridor} considered a road system with several bottlenecks, while in \cite{iryo2007equivalent}, the authors examined a network with multiple origin-destination pairs and several roads between them, each containing a single bottleneck.

\medskip
\subsection{Paper Structure}
In \cref{sec:problem_statement} we formulate an equilibrium departure time and route choice problem in continuous time, and provide a toy example which can be solved analytically.
Then, in \cref{sec:discrete} we derive a formulation of a discrete variant of the problem as a~minimum cost multicommodity flow problem.
We show that the solution to the obtained optimization problem is a user equilibrium network loading.
Last, in \cref{sec:algo} we discuss numeric algorithms suitable for solving the discretized problem.


\section{Problem Statement in Continuous Time}\label{sec:problem_statement}

\subsection{Transportation Model}

\textbf{Transportation Network.} We consider the road network's representation as a simple directed graph, where nodes correspond to junctions and centroids (artificial origin and destination nodes) and links correspond to road segments and centroid connectors.

We would like to represent the entire transportation network as a simple directed graph, where links are associated with capacities $\hat x_e$ and free-flow times $\bar t_e$, without offloading any additional information, such as allowed turn directions, to other data structures.
This can be achieved by splitting each node corresponding to a junction in the following way.
For each input link $(v_1, v_2)$ adjacent to the intersection node $v_2$, we replace this node in the link with an artificial input node $v_{12}$.
Similarly, we replace the first node in an output link $(v_2, v_3)$ with a node $v_{23}$. 
The splitting procedure is illustrated at \cref{fig:cross_processing}. 
If the turn $(v_1, v_2, v_3)$ is allowed at the junction, we add a link $(v_{12}, v_{23})$ with capacity and free-flow time representing the turn capacity and the time required to make the turn. 
This results in that we have three disjoint subsets of links: $E = E_R \cup E_J \cup E_C$, where $E_R$ consists of links corresponding to road segments such as $(v_1, v_{12})$, $E_J$ consists of links corresponding to junctions such as $(v_{12}, v_{23})$ and $E_C$ are centroid connectors.
The resulting set of all nodes is denoted by $V$, and the whole transportation graph is denoted by $G = (V, E)$.
Sets of origin and destination centroids are denoted by $I \subset V$ and $J \subset V$  respectively.

\textbf{Travel Cost.} We assume that the travel cost, that each user strives to selfishly minimize, sums up from the travel time and a cost $\tau(t, T_0)$ which is a function of the actual arrival time $t$ parametrized by the desired arrival time $T_0$ (desired arrival times an be different for different users).
The arrival time cost represents the fact that most users aims to appear at some place at some specific time, and arriving late or too early is undesirable.
The travel time sums from free-flow travel times of traversed links $e \in E$ denoted by $\bar t_e$ and time spent in queues.  

This paper adopts the point queue link model to represent the queuing effect of congestion.
In the point queue model, queue hold no physical space: vehicles that cannot live the link just accumulate at the end of the link and do not interfere with the other vehicles, which travels freely to the end of the link.

We assume that all road segments have constant flow limit for all of its sections, and all capacity constraints are posed only for junctions.
Any change in link's capacity can be modeled by introducing an artificial junction, possibly with single input and single output.
We denote the capacity (in standardized vehicles per unit of time) of link $e \in E$ by $\hat x_{e}$, setting it to infinity for edges in $E_R$.



\textbf{Demands.}
Traffic between origins and destinations is represented by the time-dependent matrix $\{d_{ij}\}_{i \in I, j \in J}$, where $d_{ij}(t)$ is the total number/density of vehicles in zone $i$ that aim to arrive to zone $j$ at time $t$.

\begin{figure}[H]
\centering
\includegraphics[scale=0.54]{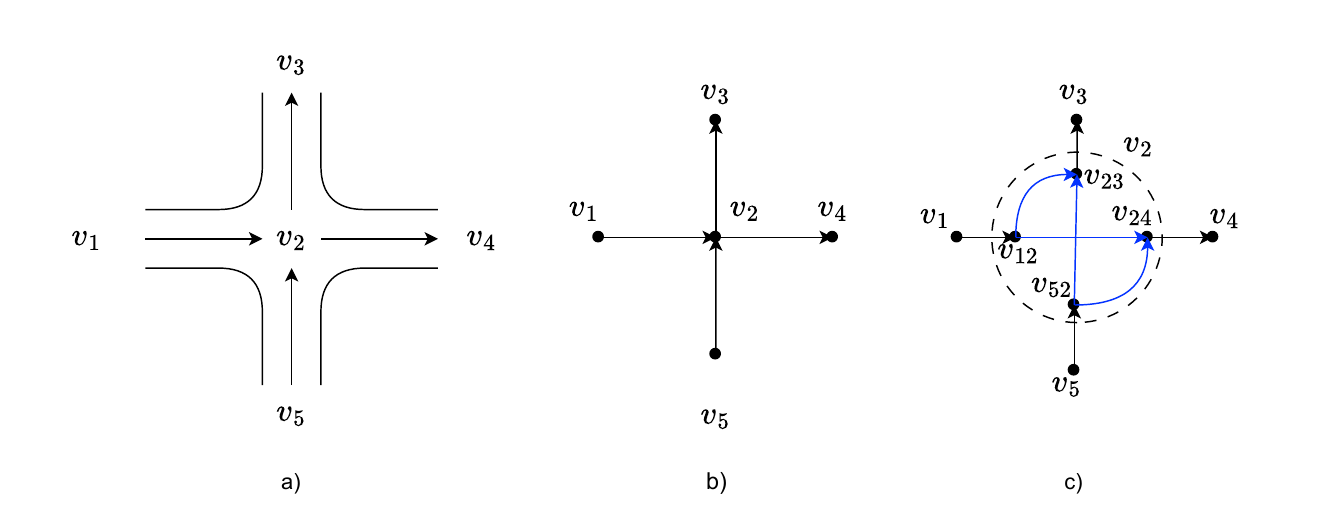}
\smallskip
\caption{Scheme of processing junctions. a) Schematic picture of a junction with indicated allowed directions of movement. b) Graph representation of the junction. c) Processed graph. Nodes inside the dashed circle are artificial nodes which correspond to node $v_2$. Blue links belong to subset $E_J$, while black links belong to subset $E_R$. }
\label{fig:cross_processing}
\end{figure}

\begin{figure}[!ht]
\centering
\includegraphics[scale=0.64]{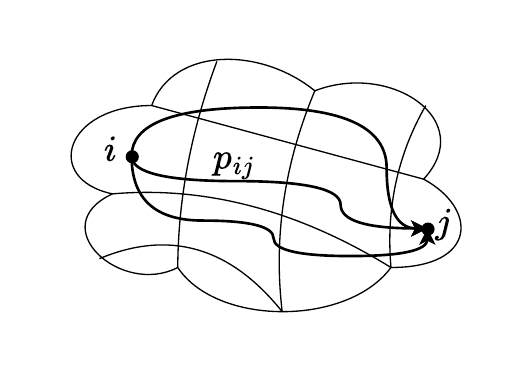}
\smallskip
\caption{Let consider some part of a city divided on zones. For example, zones $i \in O, j \in D$ are showed in the figure. There are several paths $p_{ij} \in P_{ij}$ between these nodes, where $ P_{ij}$ is a set of all possible paths with origin $i$ and destination $j$. }
\label{fig:zones}
\end{figure}

\subsection{Toy Example}
 To provide intuition about dynamic models in continuous time here we derive an equilibrium solution for a simple single origin-single destination network.
 This example was shared with us by Yurii Nesterov.
 A similar model differing in assumption about the distribution of arrival time preferences was also proposed in \cite{vickrey1969congestion}.










In the morning, \( N \) cars must leave a residential area to reach a work area. Each driver wants to arrive to the work area precisely at $T_0 =$ 9 a.m.. If a driver arrives after $T_0$, he may be late for work.
Conversely, arriving before $T_0$ results in lost sleep and waiting time before starting the workday.
We represent the costs of arriving later or earlier in the same units as travel costs.
Costs of arriving later or earlier are proportional to the time difference with coefficients $\alpha$ and $\beta$ respectively: $\tau(t, T_0) = \alpha \max\br{0, t - T_0}+ \beta \max\br{0, T_0 - t}$.

There is a single road between the residential and the work areas.
The usual travel time on a free road is $\bar t$ minutes. However, there is a narrow section midway along the road where the throughput is limited to $\hat x$ cars per hour.

We need to find the equilibrium distribution of cars depending on departure times from the residential area \( n(t) \).

It is impossible for all cars to arrive to the destination area at the same time. The driver arriving at exactly $T_0$ will be called a punctual driver. Let us denote his departure time as $t_2$.
Let also denote the time when the first car leaves the residential area by $t_1$.
The last car will depart at $t_3$.

If we want to thoroughly describe the car distribution we need to find the densities of our two flows: before and after the punctual driver \( n_1(t) \) and \( n_2(t) \), as well as the time borders limiting those distributions ($t_1$, $t_2$, $t_3$).


First, let's find the departure times of the first driver $t_1$ and last driver $t_3$. The first and the last driver don't need to to be stuck at the traffic jam, because they can move in the very beginning of the traffic flow when the jam had not formed yet or move in the end when the jam has already finished. According to our task the travellers costs depend on the time difference between the desired and actual arrival times to the destination area. The arrival time for the first driver can be expressed as ($t_1 + \bar t$), hence for the last driver it will be ($t_3 + \bar t$). The time difference between the desired and actual arrival time for the first driver ($T_0 - (t_1 + \bar t)$), and for the last driver ($(t_3 + \bar t) - T_0$). 

According to the first Wardrop principle, which is an application of the Nash's rule to the transportation task, the costs should be the same for all drivers or someone will change his strategy in order to optimise his costs. In our case the strategy means the departure time that the drivers choose to start their journeys. The equilibrium requires the same costs for the drivers.
 We can write an equation balancing the costs per minute of arriving early \( \beta \), and cost per minute of arriving late \( \alpha \)

 \begin{equation}
     \bar t + \beta(T_0 - (t_1 + \bar t)) = \bar t + \alpha ((t_3 + \bar t) - T_0).
 \end{equation}

 Obviously, the time from the first to the last car will be spread among all the other cars to pass through the bottleneck.

 Now lets calculate $t_3$. The term $\frac{N}{\hat x}$ denotes the total time taken by \( N \)  cars to pass through the bottleneck at a rate of $\hat x$ cars per hour
 
 \begin{equation}
     t_3 - t_1 = \frac{N}{\hat x}.
 \end{equation}

The previous two equations are resolved as a system to obtain $t_1$ and $t_3$:

\begin{equation}
t_1 = T_0 - \bar t - \frac{\alpha}{\alpha + \beta}  \frac{N}{\hat x}
\end{equation}

\begin{equation}
t_3 = T_0 - \bar t + \frac{\beta}{\alpha + \beta}  \frac{N}{\hat x}.
\end{equation}

As drivers travel, they either arrive late, early, or on time. Let's find the departure time  $t_2$ of the driver who arrived on time. We will call this driver punctual. The time for the punctual driver balancing out the costs adjusted for travel costs of the other drivers. As before, we will use the first Wardrop principle and consider that the benefit to each driver is the same

 \begin{equation}
     \bar t + \beta(T_0 - (t_1 + \bar t)) = T_0 - t_2.
 \end{equation}

Using previously found $t_1$ the solution for $t_2$ is:

\begin{equation}
t_2 = T_0 - \bar t - \frac{\alpha \beta}{(\alpha + \beta)}  \frac{N}{\hat x}.
\end{equation}

Now we can determine the distribution of cars that arrive before, after the punctual driver, and at the same time. All the cars that arrive before the punctual driver will form a car flow that value work time higher then rest time, on the other hand the cars that arrive after value their personal time higher than being at work at a certain time mark. This analysis focuses on the specific scenario when the road has a bottleneck with limited capacity.

The model is formulated based on the law of conservation of mass and Kirchhoff's first law, which asserts that traffic entering a road link must equal the traffic exiting, given no sources or sinks in between. We can find  \(n_1\) and \(n_2\) which are the traffic flow rates in two successive time intervals before and after the punctual driver and \(\hat x\) is the capacity of the bottleneck. The number of cars leaving the departure area, moving in the first part of the flow before the punctual driver can be found by multiplying the first traffic density \(n_1\) by the departure time difference between the first and the punctual driver. The number of cars of the same flow arriving to the destination area can be found by taking the multiplication of the bottleneck throughput by the time taken by the first part of the flow to come to the destination area from the first driver to the punctual driver. When we add the number of cars of our two flows we must get the total number of cars moving from the departure to the destination area. Lets resolve those equations as a system:

 \begin{equation}
 \begin{cases}
     n_1 (t_2 - t_1) = \hat x (T_0 - \bar t - t_1)\\

     n_2 (t_3 - t_2) + n_1 (t_2 - t_1) = N.\\
     \end{cases}\\
 \end{equation}

Lets find the time which takes to the first part of the flow to leave the departure area ($t_2 - t_1$)

\begin{equation}
{t_2 - t_1} = T_0 - \bar t - \frac{\alpha \beta}{(\alpha + \beta)}  \frac{N}{\hat x} - T_0 + \bar t + \frac{\alpha}{\alpha + \beta}  \frac{N}{\hat x} = \frac{\alpha }{(\alpha + \beta)}  \frac{N}{\hat x} (1 - \beta).
\end{equation}

To apply the conservation of mass law lets find the number of cars that will arrive to the destination area as the time needed for all the cars of the first part of the flow to arrive to the destination area times the capacity of the bottle neck

\begin{equation}
 {\hat x (T_0 - \bar t - t_1)} = {\hat x (T_0 - \bar t - T_0 + \bar t + \frac{\alpha}{\alpha + \beta}  \frac{N}{\hat x})} = \frac{\alpha N}{\alpha + \beta}. 
\end{equation}

Now we can find the density of the first part of the flow

\begin{equation}
n_1 = \frac{\hat x (T_0 - \bar t - t_1)}{t_2 - t_1} = \frac{\frac{\alpha}{\alpha + \beta}  N}{\frac{\alpha }{(\alpha + \beta)}  \frac{N}{\hat x} (1 - \beta)} = \frac{\hat x}{1 - \beta}.
\end{equation}

Lets find the time taken by the second part of the flow to leave the departure area

\begin{equation}
{t_3 - t_2} = T_0 - \bar t + \frac{ \beta}{(\alpha + \beta)}  \frac{N}{\hat x} - T_0 + \bar t + \frac{\alpha \beta}{\alpha + \beta}  \frac{N}{\hat x} =  \frac{\beta }{(\alpha + \beta)}  \frac{N}{\hat x} (1 + \alpha).
\end{equation}

Adding together our two flows we should end up with a total number of cars. Using that principle allows find the density of the second part of the flow:

\begin{equation}
n_2 = \frac{N - n_1 (t_2 - t_1) }{t_3 - t_2} = \frac{N -\frac{\hat x}{(1 - \beta)}  \frac{\alpha }{(\alpha + \beta)}  \frac{N}{\hat x} (1 - \beta)}{\frac{\beta }{(\alpha + \beta)}  \frac{N}{\hat x} (1 + \alpha)} 
=\frac{\frac{\alpha + \beta - \alpha }{(\alpha + \beta)} }{\frac{\beta }{(\alpha + \beta)}  \frac{(1 + \alpha)}{\hat x} } = \frac{\hat x}{1 + \alpha}. 
\end{equation}

Now we can describe our car flow as a set of inequalities for time borders and the densities of each part

\begin{equation}
n(t) = 
\begin{cases}
    0, t < t_1 \\
    n_1, t_1 \le t < t_2 \\
    n_2, t_2 \le t < t_3 \\
    0, t_3 \geqslant t.
\end{cases} \\
\end{equation}






\section{Mathematical Formulation in Discrete Time}\label{sec:discrete}
We discretize time into small intervals of length $\Delta t$. These intervals must be sufficiently small to ensure that no vehicle traverses more than one link in a single interval. Typically, a value in the range of 5--10 seconds is suitable for $\Delta t$.
We normalize the time axis by a~time quant $\Delta t$, therefore $t \in \cT = \{1, \ldots, T\}$.




To formulate the discrete equilibrium problem, we define a~time-expanded transportation graph~\cite{boyles2023transportation}, where each node of the original transportation graph is multiplied by the number of timepoints and links are multiplied with account of minimum time required to travel through a link.
Specifically, we make a copy of each regular node (not centroid) for each timepoint $\tilde \cV = \cbr{V \setminus (I\cup J)} \times \cT$, and define the road segments $\cE_R$ and junctions link sets $\cE_J$ by 
\begin{equation}
  \cE_R = 
  \br{\cbr{(v_1, t_1), (v_2, t_2)}: (v_1, v_2) \in E_R,\, t_2 - t_1 \geq \bar t_{(v_1, v_2)}}
\end{equation}
and 
\begin{equation}
  \cE_J = 
  \br{\cbr{(v_1, t), (v_2, t + \bar t_{(v_1, v_2)})}: (v_1,v_2) \in E_J,\, t \in \{0, 1, \ldots, T - \bar t_{(v_1, v_2)}\}}.
\end{equation}
This means that vehicles can spend arbitrary time waiting in point queue on a link, but times for passing junctions are fixed. 
For each link $\cbr{(v_1, t_1), (v_2, t_2)}$ its cost is set to $t_2 - t_1$.

We connect each origin centroid $i \in I$ to each copy of each node it was connected to in graph $G$.
To embed the arrival costs in the time-expanded graph, we make time copies for each destination centroid, redefining $J$ as $J = J \times \cT$ for users that aim to arrive at different moments of time.
We also make another copies of destinations $J' = J \times \cT$ for users who actually arrived at different moments of time and connect them to all corresponding copies of junction nodes and to all copies of same centroids in $J$.
Travel costs $\bar t_e$ on links $\cbr{(j', t'), (j, t)}$, $j' \in J'$, $j \in J$ represent cost of arrival at zone $j$ in time $t'$ for users who aimed to arrive at time $t$.

We denote the time-expanded transportation graph described above as $\cG = (\cV, \cE)$.

\begin{figure}[!ht]
\centering
\includegraphics[scale=0.74]{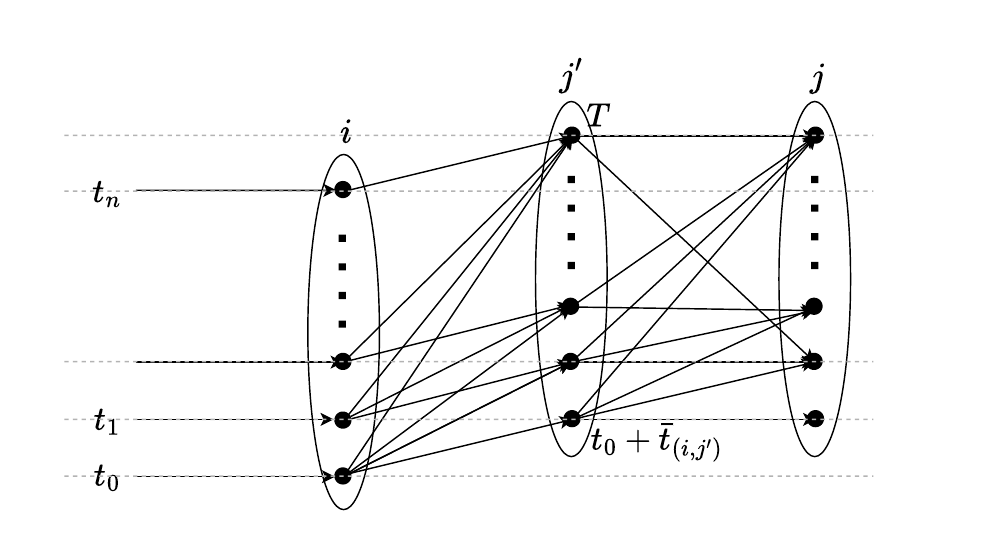}
\smallskip
\caption{Example of time-expanded graph. Node $i$ is a source node with different departure times. Node $j'$ is a destination zone witth different arrival times. Node $j$ shows aimed arrival time. Edges between $i$ and $j'$ are physical edges and have some travel costs, however edges between $j'$ and $j$ does not exists in reality and used to introduce arrival costs.}
\label{fig:time_dependent_graph}
\end{figure}

\subsection{Mathematical Formulations}
\textbf{Complementarity.}
By the first Wardrop's principle, the equilibrium time and route choice problem formulates as the problem of assigning a departure time and a path in the spatial transportation network for each user in such a way that no user can change his strategy (departure time and spatial path) to strictly decrease travel cost.
If we embed the departure time in the definition of path, we can say that in equilibrium each user is assigned to a shortest path.

For single spatial path, time spent in queue may depend on the network loading.
But in time-expanded graph, each path has fixed travel cost.
The way, in which users affect each other choices, from the time-expanded graph point of view, is that some paths may be prohibited to use, because they are already occupied  by other agents so that their capacity limit is reached.
Therefore, we need to distinguish available and fully occupied paths.



Mathematically, the equilibrium problem can be stated as a nonlinear complementarity problem:
\newcommand{\freeP}{P^{\text{avail}}}
\begin{alignat}{2}
  (T_{p'} - T_p) x_{p} &\geq 0  &\; \forall p \in P_{ij},\, \forall p' \in \freeP_{ij}(x),\, \forall i \in I,~ j\in J \label{eq:compl}
  \\
  x_p &\geq 0 &\forall p \in P,\label{eq:nonneg}
  \\
  \sum_{p \in P, p \ni e} x_p &\leq \bar x_e &\forall e \in \cE,\label{eq:caps}
  \\
  \sum_{p \in P_{ij}} x_p &= d_{ij} &\forall i \in I,~ j\in J,\label{eq:demand}
\end{alignat}
where  $P_{ij}$ is the set of all paths from origin node $i\in I$ to  destination node $j \in J$ of the $ij$-th demand in the time-expanded graph; $\freeP_{ij}(x)$ is the set of paths $p \in P_{ij}$ such that flow $x_p$ can be increased by a $\delta_p > 0$ without violating capacity constraints \eqref{eq:caps};
$P = \bigcup_{i\in I, j \in J} P_{ij}$; $x$ is the path flows vector with components $x_p,\, p\in P$; $T_p$ is the travel cost for path $p$;
The meaning of the inequalities is as follows: \eqref{eq:nonneg} ensures that flows are non-negative; \eqref{eq:compl} stands for either $x_p = 0$ or 
the travel cost for path $p$ is less or equal than the cost of the shortest available path for $ij$-th demand (thus the problem is called complementarity problem); and \eqref{eq:caps},\eqref{eq:demand} express capacity and demand constraints respectively.


\textbf{Variational Inequality.}
Let $X$ denote the set of all feasible path flows vectors, i.e. path flows vectors  satisfying \eqref{eq:nonneg}-\eqref{eq:demand}.
Then, the nonlinear complementarity problem can be reformulated as the following variational inequality problem (VI) on $x$: 
\begin{equation}
  x \in X, \quad \sum_{p \in P} T_p (x_p' - x_p) \geq 0 \; \forall x' \in X \label{eq:vi}. \tag{\ref{eq:compl}'} 
\end{equation}

\begin{lemma}\label{lem:vi_to_equil}
  Any solution to VI \eqref{eq:vi} is an equilibrium assignment \eqref{eq:compl}-\eqref{eq:demand}
\end{lemma}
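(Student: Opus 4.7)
The plan is to argue by contradiction: assume that $x$ solves the VI \eqref{eq:vi} but fails the complementarity condition \eqref{eq:compl}, and then construct a feasible perturbation $x' \in X$ that strictly decreases the VI inner product, contradicting the variational inequality.

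\textbf{Step 1: set up the contradiction.} Suppose there exist $i \in I$, $j \in J$, a path $p \in P_{ij}$ with $x_p > 0$, and an available path $p' \in \freeP_{ij}(x)$ with $T_{p'} < T_p$. By the definition of $\freeP_{ij}(x)$, there is some $\delta_{p'} > 0$ such that $x_{p'} + \delta_{p'}$ together with the rest of $x$ still satisfies every capacity constraint \eqref{eq:caps}.

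\textbf{Step 2: construct the perturbation.} Set $\delta = \min(x_p, \delta_{p'}) > 0$ and define $x'$ by
\begin{equation}
  x'_p = x_p - \delta, \quad x'_{p'} = x_{p'} + \delta, \quad x'_q = x_q \text{ for } q \notin \{p, p'\}.
\end{equation}
I then verify $x' \in X$: non-negativity \eqref{eq:nonneg} follows from $\delta \le x_p$; the demand constraint \eqref{eq:demand} for pair $(i,j)$ is preserved because the two changes cancel, and all other $(i,j)$ demands are untouched; the capacity constraint \eqref{eq:caps} is preserved because on edges used by both $p$ and $p'$ the net change is zero, on edges used only by $p$ the load decreases, and on edges used only by $p'$ the increase of at most $\delta_{p'}$ is admissible by the choice of $\delta_{p'}$.

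\textbf{Step 3: derive the contradiction.} Substituting $x'$ into the VI \eqref{eq:vi} yields
\begin{equation}
  0 \le \sum_{q \in P} T_q(x'_q - x_q) = T_{p'}\delta - T_p \delta = (T_{p'} - T_p)\,\delta,
\end{equation}
which contradicts $T_{p'} < T_p$ and $\delta > 0$. Hence no such $(p,p')$ exists, so $(T_{p'}-T_p)x_p \ge 0$ holds for all the pairs required in \eqref{eq:compl}, and constraints \eqref{eq:nonneg}--\eqref{eq:demand} hold because $x \in X$ by assumption.

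\textbf{Main obstacle.} The only delicate point is the capacity check in Step~2 for edges shared between $p$ and $p'$: one must be careful that $p$ and $p'$ can share edges, in which case increasing $x_{p'}$ alone by $\delta_{p'}$ might violate capacity, but the simultaneous decrease of $x_p$ by the same amount compensates exactly on such edges. So the construction is safe as long as one rewrites the shared-edge load as the sum over all paths containing that edge and notes that only the $p$ and $p'$ terms change, by $+\delta$ and $-\delta$. Everything else is a routine bookkeeping exercise.
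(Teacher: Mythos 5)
Your proposal is correct and follows essentially the same argument as the paper: the same contradiction setup, the same perturbation $x'$ with shift $\min(x_p,\delta_{p'})$ between $p$ and $p'$, and the same evaluation of the VI inner product. The only difference is that you spell out the feasibility check $x' \in X$ (which the paper leaves implicit); note that this check is even easier than your ``main obstacle'' suggests, since increasing $x_{p'}$ by $\delta_{p'}$ alone already respects capacities by the definition of $\freeP_{ij}(x)$, and decreasing $x_p$ can only reduce edge loads further.
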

\begin{proof}
  Since $x \in X$ simply denotes \eqref{eq:nonneg}-\eqref{eq:demand}, we only require to show that \eqref{eq:compl} follows from the inequality in \eqref{eq:vi}.
  Assume the opposite, that there exist $i\in I,~ j \in J$ and $p \in P_{ij}, p' \in \freeP_{ij}$ such that $x_p > 0$ and $T_{p'} < T_p$.
  Then we can simply define $x'$ by setting $x'_\pi = x_\pi$ for all 
  $\pi \in P \setminus \{p, p'\}$, $x'_{p'} = x_{p'} + \min\br{\delta_{p'}, x_{p}}$ and $x'_p = x_p - \min\br{\delta_{p'}, x_{p}}$.
  It gives us $\sum_{p \in P} T_p (x_p' - x_p) = (T_{p'} - T_p)\min\br{\delta_{p'}, x_{p}} < 0$, what contradicts \eqref{eq:vi}.
\end{proof}

\textbf{Linear Programming.}
One can note that \eqref{eq:vi} coincides with the first-order optimality criteria for a problem of minimizing a convex function $f(x)$ on a convex set $X$ if $\partial f(x) / \partial x_p = T_p$ \cite[Propositon~1.3]{bubeck2014convex}.
The set $X$ defined above is indeed convex, and the convex function $f(x) = \sum_{p \in P} x_p T_p$, which is equal to total travel cost, satisfies the equality for partial derivatives. 
Therefore, the following convex optimization problem
\begin{equation}\label{eq:opt}
\min_{x \in X} \sum_{p \in P} x_p T_p,
\end{equation}
which, in fact, is a linear programming problem, is equivalent to VI \eqref{eq:vi}.
Let us assume that $X$ is nonempty. Due to \eqref{eq:nonneg} and \eqref{eq:caps}, $X$ is bounded. This implies that problem~\eqref{eq:opt} has a solution.
Together with \cref{lem:vi_to_equil} this gives us the following theorem.
\begin{theorem}
  Let $X$ be nonempty set of path flows vectors $x$ defined by \eqref{eq:nonneg}-\eqref{eq:demand}.
  Then, the equilibrium time and route problem \eqref{eq:compl}-\eqref{eq:demand} has a solution.
  This solution is a solution to the linear programming problem~\eqref{eq:opt}.
\end{theorem}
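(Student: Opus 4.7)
The plan is to chain three observations, all of which are essentially already present in the discussion preceding the statement: (i) the LP \eqref{eq:opt} attains its minimum, (ii) any minimizer of the LP solves the variational inequality \eqref{eq:vi}, and (iii) any VI solution is an equilibrium assignment, which is the content of \cref{lem:vi_to_equil}.

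First I would verify existence of an LP optimum. The set $X$ is the intersection of finitely many linear equalities \eqref{eq:demand} and linear inequalities \eqref{eq:nonneg}, \eqref{eq:caps}, hence is a closed polyhedron; it is nonempty by assumption. For boundedness I would prefer to invoke the demand equations \eqref{eq:demand} rather than the capacity constraints \eqref{eq:caps} alone, since links in $E_R$ carry infinite capacity: for any $p \in P_{ij}$ we have $0 \leq x_p \leq \sum_{p' \in P_{ij}} x_{p'} = d_{ij}$, so $X$ is bounded. Hence $X$ is compact and the continuous linear objective $f(x) = \sum_{p \in P} x_p T_p$ attains its minimum on $X$ by Weierstrass' theorem; pick such a minimizer $x^\ast$.

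Next I would show that $x^\ast$ solves \eqref{eq:vi}. The function $f$ is linear, hence convex and differentiable, with $\partial f / \partial x_p = T_p$. By the first-order optimality condition for minimizing a convex differentiable function over a convex set (cited in the text as \cite[Proposition~1.3]{bubeck2014convex}), $x^\ast \in \arg\min_{x \in X} f(x)$ is equivalent to
\begin{equation*}
  \sum_{p \in P} \frac{\partial f}{\partial x_p}(x^\ast)\,(x_p' - x_p^\ast) \;=\; \sum_{p \in P} T_p (x_p' - x_p^\ast) \;\geq\; 0 \quad \forall\, x' \in X,
\end{equation*}
which is exactly \eqref{eq:vi}. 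Applying \cref{lem:vi_to_equil} then yields that $x^\ast$ satisfies \eqref{eq:compl}-\eqref{eq:demand}, proving both the existence of an equilibrium assignment and the claim that it can be obtained as an optimal solution to the LP \eqref{eq:opt}.

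There is no substantive obstacle: the argument is a concatenation of standard facts (compactness of a bounded polyhedron, first-order optimality over a convex set, and the already-proved \cref{lem:vi_to_equil}). The only point worth treating carefully is boundedness of $X$, which I would handle via \eqref{eq:demand} as above rather than via \eqref{eq:caps}, because the road-segment links $e \in \cE_R$ in the time-expanded graph have $\bar x_e = \infty$ and thus impose no per-path bound on their own.
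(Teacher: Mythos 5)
Your proposal is correct and follows essentially the same route as the paper: existence of an LP minimizer on the compact polyhedron $X$, equivalence of the LP with the variational inequality \eqref{eq:vi} via the first-order optimality condition, and then \cref{lem:vi_to_equil} to conclude the minimizer is an equilibrium. The one place you deviate --- deriving boundedness of $X$ from \eqref{eq:nonneg} and \eqref{eq:demand} rather than from \eqref{eq:nonneg} and \eqref{eq:caps} as the paper does --- is a genuine improvement, since the road-segment links in $\cE_R$ have infinite capacity and so \eqref{eq:caps} alone does not obviously bound every path flow, whereas $0 \leq x_p \leq d_{ij}$ follows immediately from the demand equations.
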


\section{Algorithmic Approaches}\label{sec:algo}
The linear programming problem~\eqref{eq:opt} can further be classified as a minimum cost multicommodity flow (MCMF) problem~\cite{ahuja1988network}.
Depending on the network size, number of timepoints $|\cT|$ and the desired solution accuracy, different algorithmic approaches can be preferred for solving the MCMF problem.
Here we discuss several methods appropriate to the specifics of the problem related with the properties of time-expanded graph.
For comprehensive review of algorithms for multicommodity flow problems we refer to~\cite{wang2018multicommodity2}.

\subsection{Link-Based}\label{sec:link-based}
Problem~\eqref{eq:opt} is defined for the path flows vector variable.
The problem can also be reformulated for link flows variables \cite{ahuja1988network}:
\begin{align}
  &\min_{\substack{x_{ei} \geq 0,\\ e \in \cE, i \in I}} \sum_{e \in \cE} \bar t_e x_e
  \\\text{s.t. }&
  x_e = \sum_{i \in I} x_{ei} \quad \forall e \in \cE
  \\&
  x_e \leq \bar x_e \quad \forall e \in \cE \label{eq:link_caps}
  \\&\hspace{-0.5cm}
  \sum_{e \in \text{Out}(v)} x_{ei} -  \sum_{e \in \text{In}(v)} x_{ei} = 
  \begin{cases}
    0, \; \text{if } v \notin J \text{ and } v \neq i,\\
    \sum_{j \in J}d_{ij},  \;\text{if } v = i,\\
    -d_{iv},  \;\text{if }  v \in J,
  \end{cases}
  \forall v \in \cV,~ i \in I, \label{eq:kirch}
\end{align}
where $x_{ei}$ denotes flow on edge $e$ originating from origin $i\in I$; $x_e$ denotes total flow on edge $e$; $\text{In}(v) \subset \cE$ and $\text{Out}(v) \subset \cE$ denote sets of links coming in and out of node $v \in \cV$ respectively.
Equation \eqref{eq:kirch} imposes flow continuity constraint in form of Kirchhoff's current law.

In this form MCMF problem has $O(mT|I|)$ variables and $O(nT|I|)$ constraints.
Constraint \eqref{eq:kirch} can be written in a matrix form using the incidence matrix of the time-expanded graph.
This allows to apply existing linear programming solvers \cite{huangfu2018parallelizing,diamond2016cvxpy}, but for larger networks or finer time discretization this approach may not work due to the problem's size.

\subsection{Column Generation}\label{sec:column-generation}
The column generation approach~\cite[Section~17.5]{ahuja1988network} can be applied to solve the MCMF problem in the path-flows form~\eqref{eq:opt}.
Basically, it is a specific variant of simplex method, which tackle the problem of huge dimension of $P$ by operating only on a limited subset of paths from $P$ which carry nonzero flow.
Column generation simplex method at each iteration solves the shortest paths problem for each demand and then solves a linear system of equations to distribute path flows between previously used paths and newly found shortest paths.
Dual variable's values are updated to satisfy a condition that all active paths for given demand have the same travel costs (with respect to the additional costs introduced by the dual variables).
The primal variables in the linear system, also called basic variables, are nonzero path flows and nonzero slack variables for constraint~\eqref{eq:link_caps}.
The number of basic variables is the number of paths with nonzero flow plus the number of not fully loaded link, and it is equal to $|K| + |\cE|$.
Since $|\cE| = O(mT)$, the system can be prohibitively large.

However, it is likely that only a small subset of links will carry nonzero flow at each iteration: for every $t \in \cT,~ u,v \in V$ links $\cbr{(u,t), (v, t')}$ can be expected to be used only for $t' \geq t$ in a small subset of $\cT$, corresponding to limited variation of waiting time between users in the same queue.
After solving the shortest path subproblem, unused links can be removed from consideration for the linear system solving step, greatly reducing number of variables in the system.
With this optimization in mind, we can expect each step of the column generation simplex method require solving linear system on $O(m + |K|)$ variables and solving the shortest path problem on network with $O(Tn)$ nodes and $O(Tm)$ links for each demand, what can be done by $|I|$ invocations of Dijkstra's algorithm.

\subsection{Dual Subgradient Methods}\label{sec:dual-subgd}
By using Lagrangian duality to handle the capacity constraints \eqref{eq:caps} in problem \eqref{eq:opt}, one can obtain the following primal-dual problem~\cite[Section~17.4]{ahuja1988network}:
\begin{equation}
  \max_{\substack{y_e \geq 0, \\ e \in \cE}} \min_{\substack{x_p \geq 0, p \in P \\ \sum\limits_{p \in P_{ij}} x_p = d_{ij}}} 
  \br{\sum_{e \in \cE} \bar t_e \sum_{p \ni e}x_p + \sum_{e \in \cE}y_e \cbr{\sum_{p \ni e}x_p - \bar x_e}},
\end{equation}
where dual variables $y_e$ penalize usage of congested links.
As in column generation approach, one iteration of a subgradient method for solving the dual problem (by variable $y$) require finding shortest paths for all demands with respect to additional costs introduced with dual variables. 
Demands are then distributed over the shortest paths what induces the so-called all-or-nothing traffic assignment on links.
Comparing to column generation, the linear system solving step is replaced with a much simpler operation: 
the flow distribution from the previous iteration is averaged with the obtained all-or-nothing assignment.
Dual variables values $y_e$ are increased for congested links and decreased for links with available capacity (proportionally to the difference $\sum_{p \ni e}x_p - \bar x_e$ in case of classical subgradient method).
Again, only variables, corresponding to active (used) links are updated at each iteration. 

In \cite{kubentayeva2021finding} a detailed discussion of this approach is given for general MCMF problem (referred to by the alias of the stable dynamic model) with convergence analysis of adaptive primal-dual subgradient methods.

\medskip
Since practically efficient simplex method has unsatisfying worst-case complexity~\cite{klee1972good}, we cannot make informative analytical comparison of complexities of above mentioned approaches.
However, one can expect link-based approach~\ref{sec:link-based} only work for small networks, while column generation~\ref{sec:column-generation} and dual subgradient~\ref{sec:dual-subgd} methods can be applied for large networks with the latter have computationally more affordable but less effective iteration.

\section{Conclusion}
The proposed dynamic model has relatively low computational complexity, and suitable solution algorithms for various scenarios were proposed. 
It has no guarantees for the uniqueness of the equilibrium, but the system optimum solution is always an equilibrium and significant differences in travel costs for different equilibria may indicate inefficient system design.
More physical modelling of link flows than in the Beckmann model and broader application range than that of the Stable Dynamics model (e.g. peak hours) give the potential to use the proposed model as a compromise between static and more complex dynamic approaches e.g. in combined travel-demand models.

\begin{credits}
\subsubsection{\ackname} 
The research is supported by the Ministry of Science and Higher Education of the Russian Federation (Goszadaniye) No. 075-03-2024-117, project No. FSMG-2024-0025.
\end{credits}

\bibliographystyle{splncs04}
\bibliography{references}
\end{document}